\newtheorem{theorem}{Theorem}
\newtheorem{proposition}[theorem]{Proposition}
\newtheorem{corollary}[theorem]{Corollary}
\theoremstyle{definition}
\newtheorem{example}[theorem]{Example}
\definecolor{webgreen}{rgb}{0,.5,0}
\definecolor{webbrown}{rgb}{.6,0,0}
\newcommand{\seqnum}[1]{\href{http://oeis.org/#1}{\underline{#1}}}
\begin{document}

\begin{center}
\vskip 1cm{\LARGE\bf On a transformation of Riordan moment sequences} \vskip 1cm \large
Paul Barry\\
School of Science\\
Waterford Institute of Technology\\
Ireland\\
\href{mailto:pbarry@wit.ie}{\tt pbarry@wit.ie}
\end{center}
\vskip .2 in

\begin{abstract} We define a transformation that associates certain exponential moment sequences with ordinary moment sequences in a natural way. The ingredients of this transformation are series reversion, the Sumudu transform (a variant of the Laplace transform), and the inverting of generating functions. This transformation also has a simple interpretation in terms of continued fractions. It associates lattice path objects with permutation objects, and in particular it associates the Narayana triangle with the Eulerian triangle. \end{abstract}

\section{Introduction}

In this note we study relationships between moment sequences that are defined by ordinary Riordan arrays \cite{Book, SGWW, Spru} and by certain exponential Riordan arrays \cite{Book, Survey}, which may be described as Eulerian. The theory of orthogonal polynomials \cite{Chihara, Gautschi, Chebyshev, Szego} defined by Riordan arrays has been much studied \cite{Classical, CB, Book, Barry_moments, Meixner}. For ordinary Riordan arrays, the associated orthogonal polynomials are generalized Chebyshev polynomials. In this note, we shall define a mapping $\mathcal{T}$, whose inverse $\mathcal{T}^{-1}$ maps a subset of the set of moments defined by ordinary Riordan arrays to a set of moments whose exponential generating function is ``Eulerian''. A central role in this mapping is played by the Sumudu transform \cite{Belgacem2, Belgacem1, Watugala}.

Many sequences and triangles in this note are referenced by their $Annnnnn$ number in the On-Line Encyclopedia of Integer Sequences \cite{SL1, SL2}, an invaluable tool for notes such as this one. 

All matrices in this note are of infinite extent; we show in each case a relevant truncation. The operator $[x^n]$ is the operator that extracts the coefficient of $x^n$ in a power series \cite{MC}. Ordinary Riordan arrays are designated by $(g,f)$ while exponential Riordan arrays are designated by $[g,f]$.

\section{Motivation}
We begin with the Eulerian generating function
$$ E(x,y; a,b)= \frac{(1-y) e^{a x(1-y)}}{1-y e^{bx(1-y)}}.$$

When $a=0$ and $b=1$, we get
$$E(x,y; 0,1)= \frac{1-y}{1-y e^{x(1-y)}},$$ which is the generating function of the Eulerian polynomials
$$1, y, y(y + 1), y(y^2 + 4y + 1), y(y^3 + 11y^2 + 11y + 1), y(y^4 + 26y^3 + 66y^2 + 26y + 1), \ldots,$$ with
coefficient array the Eulerian triangle \seqnum{A123125} that begins
$$\left(
\begin{array}{cccccc}
 1 & 0 & 0 & 0 & 0 & 0 \\
 0 & 1 & 0 & 0 & 0 & 0 \\
 0 & 1 & 1 & 0 & 0 & 0 \\
 0 & 1 & 4 & 1 & 0 & 0 \\
 0 & 1 & 11 & 11 & 1 & 0 \\
 0 & 1 & 26 & 66 & 26 & 1 \\
\end{array}
\right).$$

When $a=1$ and $b=1$, we get
$$ E(x,y; 1,1)= \frac{(1-y) e^{ x(1-y)}}{1-y e^{x(1-y)}},$$ which is the generating function of the variant Eulerian polynomials $S_n(y)$ that begin
$$1, 1, y, y(y + 1), y(y^2 + 4y + 1), y(y^3 + 11y^2 + 11y + 1), y(y^4 + 26y^3 + 66y^2 + 26y + 1), \ldots.$$ 
Recall that we have \cite{Petersen}
$$S_n(t)=\sum_{w \in S_n} t^{des(w)}.$$ 
These polynomials have a
coefficient array \seqnum{A173018} that begins
$$\left(
\begin{array}{cccccc}
 1 & 0 & 0 & 0 & 0 & 0 \\
 1 & 0 & 0 & 0 & 0 & 0 \\
 1 & 1 & 0 & 0 & 0 & 0 \\
 1 & 4 & 1 & 0 & 0 & 0 \\
 1 & 11 & 11 & 1 & 0 & 0 \\
 1 & 26 & 66 & 26 & 1 & 0 \\
\end{array}
\right).$$

When $a=1$ and $b=2$ we get
$$ E(x,y; 1,2)= \frac{(1-y) e^{ x(1-y)}}{1-y e^{2x(1-y)}},$$ which is the generating function of the type $B$ Eulerian polynomials
$$1, y + 1, y^2 + 6y + 1, y^3 + 23y^2 + 23y + 1, y^4 + 76y^3 + 230y^2 + 76y + 1, \ldots,$$ with coefficient array \seqnum{A060187} that begins
$$\left(
\begin{array}{cccccc}
 1 & 0 & 0 & 0 & 0 & 0 \\
 1 & 1 & 0 & 0 & 0 & 0 \\
 1 & 6 & 1 & 0 & 0 & 0 \\
 1 & 23 & 23 & 1 & 0 & 0 \\
 1 & 76 & 230 & 76 & 1 & 0 \\
 1 & 237 & 1682 & 1682 & 237 & 1 \\
\end{array}
\right).$$

We wish now to associate with this exponential generating function (in $x$) an ordinary generating function. To this end we shall
\begin{itemize}
\item Invert the generating function $E(x,y;a,b)$ to get $1/E(x,y;a,b)$ 
\item Take the Sumudu transform of $1/E(x,y;a,b)$ to get an ordinary generating function $g(x,y;a,b)$ 
\item Use series reversion to form the generating function $G(x,y;a,b)=\frac{1}{x}\text{Rev}(x g(x,y,a,b))$. 
\end{itemize}

We shall designate this transformation pipeline by the symbol $\mathcal{T}$. Thus we have 
$$ \mathcal{T} (E(x,y;a,b))=G(x,y;a,b).$$ 
We now work out the form of $G(x,y;a,b)$. We have 
$$\frac{1}{E(x,y;a,b)} = \frac{1-y e^{bx(1-y)}}{(1-y) e^{a x(1-y)}}.$$ 
Now the Sumudu transform is given by the variant of the Laplace transform 
$$ \mathcal{S}(E)(x)=\frac{1}{x} \int_0^{\infty} E(t,y;a,b)e^{-t/x}\,dt.$$ 
We find that 
$$ \mathcal{S}\left(\frac{1}{E(t,y;a,b)}\right)(x)=\frac{1-x(a(y-1)+b)}{1+(1-y)(2a-b)x+a(a-b)(y-1)^2 x^2}.$$ 
To finish, we multiply this generating function by $x$, we revert the result, and then we divide this reversion by $x$. We obtain that 
$$G(x,y;a,b)=\frac{1+x(y-1)(2a-b)-\sqrt{1-2bx(y+1)+b^2 x^2(y-1)^2}}{2(ax(a-b)(y-1)^2+a(y-1)+b)}.$$ 
Writing $c(x)=\frac{1-\sqrt{1-4x}}{2x}$ for the generating function of the Catalan numbers \cite{Stanley} $C_n=\frac{1}{n+1}\binom{2n}{n}$, we can write this as 
$$G(x,y;a,b)=\frac{1}{1-x(b-2a)(y-1)}c\left(\frac{x(ax(a-b)(y-1)^2+ay-a+b)}{(1-x(b-2a)(y-1))^2}\right).$$ 
Thus 
$$\mathcal{T}\left(\frac{(1-y) e^{a t(1-y)}}{1-y e^{bt(1-y)}}\right)(x)=\frac{1}{1-x(b-2a)(y-1)}c\left(\frac{x(ax(a-b)(y-1)^2+a(y-1)+b)}{(1-x(b-2a)(y-1))^2}\right).$$
\begin{example} We take the case $a=0$, $b=1$. We find that 
$$\mathcal{T}\left(E(t,y;0,1)\right)(x)=\frac{1}{1+x(1-y)}c\left(\frac{x}{1+x(1-y)}\right).$$ 
This is the generating function for the Narayana polynomials that begin
$$ 1, y, y^2 + y, y^3 + 3y^2 + y, y^4 + 6y^3 + 6y^2 + y,\ldots$$ with coefficient array the Narayana triangle \seqnum{A090181} that begins
$$\left(
\begin{array}{cccccc}
 1 & 0 & 0 & 0 & 0 & 0 \\
 0 & 1 & 0 & 0 & 0 & 0 \\
 0 & 1 & 1 & 0 & 0 & 0 \\
 0 & 1 & 3 & 1 & 0 & 0 \\
 0 & 1 & 6 & 6 & 1 & 0 \\
 0 & 1 & 10 & 20 & 10 & 1 \\
\end{array}
\right).$$
Thus the Eulerian triangle is transformed to the Narayana triangle by $\mathcal{T}$. 
\end{example}
\begin{example}
We consider the case $a=1$ and $b=1$. We find that 
$$\mathcal{T}\left(E(x,y;1,1)\right)(x)=\frac{1}{1+x(y-1)}c\left(\frac{xy}{(1+x(y-1))^2}\right).$$ 
This is the generating function for the Narayana polynomials $N_n(y)$ that begin
$$1, 1, y + 1, y^2 + 3y + 1, y^3 + 6y^2 + 6y + 1,\ldots,$$ with coefficient array the Narayana triangle \seqnum{A131198} that begins
$$\left(
\begin{array}{cccccc}
 1 & 0 & 0 & 0 & 0 & 0 \\
 1 & 0 & 0 & 0 & 0 & 0 \\
 1 & 1 & 0 & 0 & 0 & 0 \\
 1 & 3 & 1 & 0 & 0 & 0 \\
 1 & 6 & 6 & 1 & 0 & 0 \\
 1 & 10 & 20 & 10 & 1 & 0 \\
\end{array}
\right).$$ 
We have \cite{Petersen} $$N_n(t)=\sum_{w \in S_n(231)}t^{des(w)}.$$ 
\end{example}
\begin{example}
We consider the $B$ type case $a=1$ and $b=2$. We find that 
$$\mathcal{T}\left(E(t,y;1,2)\right)(x)=c\left(x(1+y-x(y-1)^2)\right).$$ This is the generating function of the polynomials that begin 
$$1, y + 1, y^2 + 6y + 1, y^3 + 19y^2 + 19y + 1, y^4 + 48y^3 + 126y^2 + 48y + 1,\ldots,$$ with coefficient array that begins
$$\left(
\begin{array}{ccccccc}
 1 & 0 & 0 & 0 & 0 & 0 & 0 \\
 1 & 1 & 0 & 0 & 0 & 0 & 0 \\
 1 & 6 & 1 & 0 & 0 & 0 & 0 \\
 1 & 19 & 19 & 1 & 0 & 0 & 0 \\
 1 & 48 & 126 & 48 & 1 & 0 & 0 \\
 1 & 109 & 562 & 562 & 109 & 1 & 0 \\
 1 & 234 & 2031 & 3916 & 2031 & 234 & 1 \\
\end{array}
\right).$$
This is thus the $\mathcal{T}$ transform of the type $B$ Eulerian triangle. Note that setting $y=1$ gives us the row sums, which are $2^n C_n=[x^n] c(2x)$. 
\end{example}
\begin{example}
If $a_n=n! [x^n] f(x)$ and $b_n=[x^n] \mathcal{T}(f(s))(x)$, then we shall write 
$$ b_n = \mathcal{T} a_n.$$ 
Thus we have, for instance, 
$$\mathcal{T} n! = C_n.$$ 
This follows because 
\begin{itemize}
\item We start with $f(x)=\frac{1}{1-x}$, which we invert to get $1-x$ 
\item The Sumudu transform of $1-t$ is $1-x$
\item We have $c(x)=\frac{1}{x} \text{Rev}(x(1-x))$. 
\end{itemize}
What follows is a short table of transform pairs.
\begin{center}
\begin{tabular} {|c|c|c|c|}
\hline OEIS & $a_n$ & $b_n=\mathcal{T} a_n$ & OEIS \\\hline
 \seqnum{A000142} & $n!$ & $C_n$ & \seqnum{A000108}\\\hline
 \seqnum{A049774} & Permutations without double falls & Motzkin numbers & \seqnum{A001006} \\\hline
 \seqnum{A097899} & Permutations with no runs of length $1$ & Motzkin sums & \seqnum{A005043} \\\hline
 \seqnum{A000670} & Fubini numbers & Little Schroeder numbers & \seqnum{A001003}\\\hline
 \seqnum{A001586} & Springer numbers & - & \seqnum{A052709} \\\hline
 \seqnum{A000629} & Cyclically ordered partitions & Large Schroeder numbers & \seqnum{A006318}\\
\hline 
\end{tabular}
\end{center}
\end{example}

We close this section by noting that for sequences with ordinary generating functions that can be put in the form $$G(x,y;a,b)=\frac{1}{1+x(b-2a)(1-y)}c\left(\frac{x(ax(a-b)(y-1)^2-a(1-y)+b)}{(1+x(b-2a)(1-y))^2}\right),$$ for appropriate choices of the parameters $a$, $b$ and $y$, we can define the inverse transform $\mathcal{T}^{-1}$. This is formed by reversing the above steps. 
\begin{itemize} 
\item Calculate $g(x)=\frac{1}{x} \text{Rev}(xG)$
\item Calculate the inverse Sumudu transform of $g(s)$ to get $e(t)$
\item Form $E(t)=\frac{1}{e(t)}$. 
\end{itemize}

\section{Riordan arrays and Riordan moment sequences}
All the sequences encountered thus far are examples of Riordan moment sequences. By this we mean that they are moment sequences associated with families of orthogonal polynomials that are defined either by ordinary or by exponential Riordan arrays. We recall that an ordinary Riordan array is an invertible lower-triangular matrix $R$ defined by two power series
$$g(x)=1+g_1 x + g_2 x^2 + \cdots,$$ and 
$$f(x)= f_1 x + f_2 x^2 + \cdots,$$ whose $(n,k)$ element $R_{n,k}$ is given by 
$$R_{n,k}=[x^n] g(x)f(x)^k.$$ We write $R=(g, f)$ to signify this.
An ordinary Riordan array $R$ defines a family of orthogonal polynomials if we have 
$$R = \left(\frac{1+ \lambda x + \mu x^2}{1+\alpha x + \beta x^2}, \frac{x}{1+\alpha x + \beta x^2}\right).$$ 
The inverse of the Riordan array $R=(g, f)$ is given by 
$$R^{-1}=\left(\frac{1}{g(\bar{f})}, \bar{f}\right),$$ where 
$\bar{f}(x)=\text{Rev}(f)(x)$ is the reversion of the power series $f(x)$, defined as the power series that satisfies $f(\bar{f}(x))=x$ and $\bar{f}(f(x))=x$. For 
$R = \left(\frac{1+ \lambda x + \mu x^2}{1+\alpha x + \beta x^2}, \frac{x}{1+\alpha x + \beta x^2}\right)$ we have 
$$R^{-1}=\left(\mu(x),\frac{1-\alpha x -\sqrt{1-2 \alpha x + x^2(\alpha^2-4\beta)}}{2 \beta x}\right),$$
where 
$$\mu(x)=\frac{2 \beta}{(\beta-\mu)\sqrt{x^2(\alpha^2-4 \beta)-2\alpha x+1}+x(2\beta \lambda-\alpha(\beta+\mu))+\beta+\mu}.$$ 
The sequence with generating function $\mu(x)$ then appears as the first column of the inverse matrix $R^{-1}$. This is the (ordinary) Riordan moment sequence associated with the family of orthogonal polynomials defined by $R$. We have that
$$\mu(x)=
\cfrac{1}{1-(\alpha-\lambda)x-
\cfrac{(\beta-\mu)x^2}{1-\alpha x-
\cfrac{\beta x^2}{1-\alpha x-
\cfrac{\beta x^2}{1-\cdots}}}}.$$ 
In an obvious notation, we write this as 
$$\mu(x)=\mathcal{J}(\alpha-\lambda,\alpha,\alpha,\ldots;\beta-\mu,\beta, \beta,\ldots),$$ 
where $\mathcal{J}$ stands for ``Jacobi''.  This thus is the form of the generating function of an ordinary Riordan moment sequence. The corresponding family of orthogonal polynomials $P_n(x)$ satisfies the associated three-term recurrence
$$P_n(x)=(x-\alpha) P_{n-1}(x)-\beta P_{n-2}(x),$$ with 
$P_0(x)=1$, $P_1(x)=x-\alpha+\lambda$, and $P_2(x)=x^2 + x (\lambda- 2 \alpha) + \alpha^2 - \alpha \lambda - \beta + \mu$. 
In the case that $\mu=0$, we obtain 
$$\mu(x)=\frac{1}{1-x(\alpha-2 \lambda)}c\left(\frac{x(\lambda-x(\alpha \lambda-\beta-\lambda^2))}{(1-x(\alpha-2 \lambda))^2}\right).$$ 
By the fundamental theorem of Riordan arrays, this can be written 
$$\mu(x)=\left(\frac{1}{1-x(\alpha-2 \lambda)},\frac{x(\lambda-x(\alpha \lambda-\beta-\lambda^2))}{(1-x(\alpha-2 \lambda))^2}\right)\cdot c(x).$$ 
\begin{proposition}
We have 
$$G(x,y;a,b)=\mathcal{J}(y(b-a)+a, b(y+1),b(y+1),\ldots; b^2y,b^2y,\ldots).$$ 
\end{proposition}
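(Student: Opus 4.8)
The plan is to realize $G(x,y;a,b)$ as a special case of the general ordinary Riordan moment generating function $\mu(x)$ recorded above, and then read off its Jacobi continued fraction directly. Since in the target continued fraction the leading $\beta$-entry $b^2y$ agrees with all the subsequent ones, we are in the case $\mu=0$ (equality of the first and later $\beta$-coefficients forces $\beta-\mu=\beta$), for which we already have the closed form
$$\mu(x)=\frac{1}{1-x(\alpha-2 \lambda)}c\left(\frac{x(\lambda-x(\alpha \lambda-\beta-\lambda^2))}{(1-x(\alpha-2 \lambda))^2}\right).$$
Comparing the prefactor here with the prefactor $\frac{1}{1-x(b-2a)(y-1)}$ in the closed form of $G$ suggests $\alpha-2\lambda=(b-2a)(y-1)$, while matching the eventually constant coefficients of the displayed $\mathcal{J}$ suggests $\alpha=b(y+1)$ and $\beta=b^2y$. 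Together with $\mu=0$ these force
$$\lambda = a(y-1)+b, \qquad \mu = 0, \qquad \alpha = b(y+1), \qquad \beta = b^2 y.$$

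With these candidate values in hand I would not argue about uniqueness of the representation but instead verify directly that the $\mu=0$ formula reproduces $G$. First one computes $\alpha-2\lambda=b(y+1)-2(a(y-1)+b)=(b-2a)(y-1)$, so the prefactor and the denominator $(1-x(\alpha-2\lambda))^2$ inside the argument of $c$ match those of $G$ exactly. The substantive step is to evaluate the quantity $\alpha\lambda-\beta-\lambda^2$ that governs the numerator inside $c$; a short expansion gives the key identity
$$\alpha\lambda-\beta-\lambda^2 = a(b-a)(y-1)^2,$$
so that $\lambda-x(\alpha\lambda-\beta-\lambda^2)=a(y-1)+b+ax(a-b)(y-1)^2$. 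Multiplying by $x$ recovers precisely the numerator $x\bigl(ax(a-b)(y-1)^2+a(y-1)+b\bigr)$ appearing in $G$ (recall $ay-a=a(y-1)$). Hence $G(x,y;a,b)$ coincides with $\mu(x)$ for the above parameter choice.

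It then remains only to translate this back into the continued fraction $\mathcal{J}(\alpha-\lambda,\alpha,\alpha,\ldots;\beta-\mu,\beta,\beta,\ldots)$. Here $\alpha-\lambda=b(y+1)-(a(y-1)+b)=y(b-a)+a$ and $\beta-\mu=b^2y$, while the repeated entries are $\alpha=b(y+1)$ and $\beta=b^2y$; substituting these yields exactly the claimed expression. The only place where genuine care is needed is the algebraic identity for $\alpha\lambda-\beta-\lambda^2$, which is where all the cancellation among the three Eulerian parameters $a$, $b$, $y$ takes place; everything else is routine substitution into formulas already established in the previous two sections.
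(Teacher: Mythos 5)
Your proof is correct and follows essentially the route the paper intends (the paper states this proposition without an explicit proof, as an immediate consequence of the $\mu=0$ closed form for $\mu(x)$ and the derived expression for $G$): you identify $\lambda=a(y-1)+b$, $\alpha=b(y+1)$, $\beta=b^2y$, $\mu=0$, and the key cancellation $\alpha\lambda-\beta-\lambda^2=a(b-a)(y-1)^2$ checks out, as does $\alpha-2\lambda=(b-2a)(y-1)$. Reading off $\mathcal{J}(\alpha-\lambda,\alpha,\ldots;\beta,\beta,\ldots)$ then gives exactly the claimed continued fraction, so nothing is missing.
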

We now turn our attention to exponential Riordan arrays. An exponential Riordan array $R$ is an invertible lower-triangular matrix defined by two power series
$$g(x)=1+ g_1 \frac{x}{1!} + g_2 \frac{x^2}{2!} + g_3 \frac{x^3}{3!} + \cdots,$$ and 
$$f(x)= f_1 \frac{x}{1!} + f_2 \frac{x^2}{2!} + f_3 \frac{x^3}{3!} + \cdots.$$ The general $(n,k)$-th element of $R$ is then defined to be 
$$R_{n,k}=\frac{n!}{k!} [x^n] g(x)f(x)^k.$$ 
We write $[g, f]$ to denote this matrix. With every exponential Riordan array $R$ we can associate its production matrix $P_R$ which is defined to be the matrix $P_R=R^{-1} \overline{R}$, where $\overline{R}$ is the matrix $R$ with its first row removed \cite{ProdMat_0, ProdMat}. The matrix $R^{-1}$ will then be the coefficient array of a family of orthogonal polynomials if $P_R$ has a bivariate generating function of the form
$$e^{xy}(\alpha+ \beta x + y(1+\gamma x+ \delta x^2))=e^{xy}(Z(x)+ y A(x)),$$ where
$$A(x)=f'(\bar{f}(x)), \quad Z(x)=\frac{g'(\bar{f}(x))}{g(\bar{f}(x))}.$$ The matrix $P_R$ then begins
$$\left(
\begin{array}{ccccccc}
 \alpha  & 1 & 0 & 0 & 0 & 0 & 0 \\
 \beta  & \alpha +\gamma  & 1 & 0 & 0 & 0 & 0 \\
 0 & 2 \beta +2 \delta  & \alpha +2 \gamma  & 1 & 0 & 0 & 0 \\
 0 & 0 & 3 (\beta +2 \delta ) & \alpha +3 \gamma  & 1 & 0 & 0 \\
 0 & 0 & 0 & 4 (\beta +3 \delta ) & \alpha +4 \gamma  & 1 & 0 \\
 0 & 0 & 0 & 0 & 5 (\beta +4 \delta ) & \alpha +5 \gamma  & 1 \\
 0 & 0 & 0 & 0 & 0 & 6 (\beta +5 \delta ) & \alpha +6 \gamma  \\
\end{array}
\right).$$
Note that the diagonal terms form an arithmetic sequence, while the sub-diagonal terms, when divided successively by $1,2,3,\ldots$, also form an arithmetic sequence. 

The corresponding exponential Riordan moment sequence (given by the first column elements of $R$) will then have a generating function given by the continued fraction \cite{CFT, Viennot, Wall}
$$\mu_e(x)=
\cfrac{1}{1-\alpha x-
\cfrac{\beta x^2}{1-(\alpha+\gamma)x-
\cfrac{(2 \beta+2 \delta)x^2}{1-(\alpha+2 \gamma)x-
\cfrac{(3 \beta+ 6 \delta)x^2}{1-\cdots}}}},$$ with coefficients drawn from the production matrix. 
Thus exponential Riordan moment sequences have ordinary generating functions given by 
$$\mathcal{J}(\alpha,\alpha+\gamma,\alpha+2 \gamma,\ldots;\beta,2(\beta+\delta),3(\beta+2 \delta),\ldots).$$ 
\begin{example} 
The exponential Riordan array \seqnum{A021009}
$$L=\left[\frac{1}{1+x}, \frac{x}{1+x}\right]$$ with general term 
$$L_{n,k}=\frac{n!}{k!}(-1)^{n-k} \binom{n}{k}$$ is the coefficient matrix of the scaled Laguerre polynomials. 
The corresponding moments are $n!$, given by the initial column of the inverse array
$$ \left[\frac{1}{1-x}, \frac{x}{1-x}\right].$$ The production matrix of this array begins 
$$\left(
\begin{array}{cccccc}
 1 & 1 & 0 & 0 & 0 & 0 \\
 1 & 3 & 1 & 0 & 0 & 0 \\
 0 & 4 & 5 & 1 & 0 & 0 \\
 0 & 0 & 9 & 7 & 1 & 0 \\
 0 & 0 & 0 & 16 & 9 & 1 \\
 0 & 0 & 0 & 0 & 25 & 11 \\
\end{array}
\right).$$ Thus $n!$, which has exponential generating function $\frac{1}{1-x}$, has an ordinary generating function given by 
$$\cfrac{1}{1-x-
\cfrac{x^2}{1-3x-
\cfrac{4x^2}{1-5x-
\cfrac{9x^2}{1-7x-\cdots}}}}.$$ 
\end{example}
With regard to the exponential generating function $E(x,y;a,b)$ (in $x$), we have the following result.
\begin{theorem} The exponential Riordan array 
$$\left[\frac{1+bxy}{(1+bx)^{a/b}}, \frac{1}{b(1-y)}\ln\left(\frac{1+bx}{1+bxy}\right)\right]$$ is the coefficient array of the family of orthogonal polynomials whose moment sequence has exponential generating function $E(x,y;a,b)$.
\end{theorem}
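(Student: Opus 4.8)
The plan is to prove the two assertions bundled in the statement separately: first that the displayed array $C=[g,f]$ really is a coefficient array of orthogonal polynomials, and second that its associated moment sequence has exponential generating function $E(x,y;a,b)$. Everything hinges on the reversion $\bar f$ of the second component. Setting $u=f(x)$ and solving $e^{b(1-y)u}=\frac{1+bx}{1+bxy}$ for $x$ gives, with the abbreviation $w=e^{b(1-y)x}$,
$$\bar f(x)=\frac{w-1}{b(1-wy)},\qquad 1+b\bar f=\frac{w(1-y)}{1-wy},\qquad 1+b\bar f\,y=\frac{1-y}{1-wy},$$
and these three identities drive both halves of the argument.

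For the orthogonality claim I would invoke the production-matrix criterion recalled above, applied to the inverse array $C^{-1}$, whose production matrix is what certifies that $C$ is a coefficient array of orthogonal polynomials. A short chain-rule computation reduces the relevant data for a coefficient array $[g,f]$ to $A(x)=1/f'(x)$ and $Z(x)=-\bigl(g'(x)/g(x)\bigr)/f'(x)$. Since $f'(x)=\frac{1}{(1+bx)(1+bxy)}$, one reads off immediately
$$A(x)=(1+bx)(1+bxy)=1+b(1+y)x+b^2y\,x^2,$$
a degree-two polynomial, so that $\gamma=b(1+y)$ and $\delta=b^2y$; substituting $g$ into the expression for $Z$ should likewise return a degree-one polynomial $\alpha+\beta x$. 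This exhibits the tridiagonal production matrix of the stated shape, hence the three-term recurrence, proving the first half.

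The substance is the moment identification. The moments form the first column of $C^{-1}$, with exponential generating function $M(x)=1/g(\bar f(x))$. The most transparent route is not to expand this quotient directly but to use the characterization that $M$ satisfies the separable equation $M'(x)/M(x)=\alpha+\beta\,\bar f(x)$ with $M(0)=1$, which is simply the production condition $Z=\alpha+\beta x$ re-expressed for the inverse array $R=[M,\bar f]$. It then suffices to check that $E$ solves the same equation: a one-line computation with the identities above shows
$$\frac{E'(x)}{E(x)}=a(1-y)+\frac{by(1-y)w}{1-wy}=\alpha+\beta\,\bar f(x),$$
and since $E(0)=1$ this forces $M=E$. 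As a cross-check I would confirm that $\alpha$ and $\beta$ so obtained reproduce the first moments $\mu_1=a+(b-a)y$ and $\mu_2=\alpha^2+\beta$ computed directly from $\mu_n=n![x^n]E=(1-y)^{n+1}\sum_{k\ge0}(a+bk)^n y^k$, and that the Jacobi array $\mathcal{J}(\alpha,\alpha+\gamma,\ldots;\beta,2(\beta+\delta),\ldots)$ is indeed the ordinary generating function of these $\mu_n$.

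I expect the decisive and most error-prone point to be the consistency between the two halves: the pair $(\alpha,\beta)$ extracted from $Z$ in the production matrix must coincide with the pair forced by the moment equation $M'/M=\alpha+\beta\,\bar f$, and both are acutely sensitive to the exponent carried by the $(1+bxy)$ factor in $g$, the fractional power $(1+bx)^{a/b}$ making the bookkeeping delicate. Pinning that exponent down so that the single-variable algebra collapses exactly to $E$ is the crux of the whole argument; by contrast, once $\bar f$ is in hand the degree computations for $A$ and the verification of the three-term recurrence are routine.
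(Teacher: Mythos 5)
Your framework is sound and is essentially the paper's own argument viewed from the coefficient-array side: the paper writes down the inverse array $[E,\bar f]$ and reads off $A$ and $Z$ from it, while you compute $A(t)=1/f'(t)$ and $Z(t)=-g'(t)/(g(t)f'(t))$ directly and characterize the moment e.g.f.\ by the separable equation $M'/M=\alpha+\beta\bar f$. Your reversion identities are correct, and so is your computation $E'/E=a(1-y)+by(1-y)\frac{w}{1-wy}=a(1-y)+by+b^2y\,\bar f(x)$, which forces $Z(t)=a(1-y)+by+b^2yt$ (constant term $\mu_1=a+(b-a)y$), exactly the $Z$ the paper asserts. The gap is that the one computation on which everything hinges --- the one you defer with ``should likewise return a degree-one polynomial'' and ``as a cross-check I would confirm'' --- in fact fails for the array as printed. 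With $g(t)=(1+bty)(1+bt)^{-a/b}$ one gets $g'/g=\frac{by}{1+bty}-\frac{a}{1+bt}$, hence $Z(t)=-\bigl(g'/g\bigr)(1+bt)(1+bty)=a(1+bty)-by(1+bt)=(a-by)+by(a-b)t$. This is indeed linear, so the orthogonality half survives, but its constant term $a-by$ disagrees with $\mu_1=a+(b-a)y$, and $\beta=by(a-b)\ne b^2y$, except when $a=2b$ --- which excludes all three motivating cases $(a,b)=(0,1),(1,1),(1,2)$. So the differential equation satisfied by $M=1/g(\bar f)$ is \emph{not} the one satisfied by $E$, and the conclusion $M=E$ does not follow; your two halves are mutually inconsistent for the stated $g$.

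The repair is precisely at the exponent you yourself flagged as the crux: one needs $g(t)=\frac{(1+bty)^{a/b-1}}{(1+bt)^{a/b}}$, for which $g'/g=\frac{(a-b)y}{1+bty}-\frac{a}{1+bt}$ and $Z(t)=a(1+bty)-(a-b)y(1+bt)=a(1-y)+by+b^2yt$, matching $E'/E$; equivalently, $1/g(\bar f)=E$ holds only for this $g$, whereas the printed $g$ gives $1/g(\bar f)=e^{ax(1-y)}\bigl(\tfrac{1-y}{1-yw}\bigr)^{a/b-1}$. Be aware that the paper's own proof has the same lacuna in mirror image: it asserts without computation that the inverse of the displayed array is $[E,\cdot]$ and that $Z=a(1-y)+by+b^2yx$ --- two statements consistent with each other and with the production matrix, but not with the displayed first component. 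Your proposal becomes a complete proof (coinciding in substance with the paper's production-matrix argument) once you actually carry out the $Z$ computation and emend the first component of the array accordingly.
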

\begin{proof} The inverse of the above Riordan array is given by 
$$\left[\frac{(1-y) e^{a x(1-y)}}{1-y e^{bx(1-y)}}, \frac{e^{bx}-e^{bxy}}{b(e^{bxy}-ye^{bx})}\right].$$ 
We find that $$A(x)=(1+bx)(1+bxy) \quad\text{and}\quad Z(x)=a(1-y)+by+b^2y x.$$ 
This allows us to calculate the  production matrix of the inverse Riordan array, which begins
$$\left(
\begin{array}{ccccc}
 y (b-a)+a & 1 & 0 & 0 & 0 \\
 b^2 y & y (2 b-a)+a+b & 1 & 0 & 0 \\
 0 & 4 b^2 y & y (3 b-a)+a+2 b & 1 & 0 \\
 0 & 0 & 9 b^2 y & y (4 b-a)+a+3 b & 1 \\
 0 & 0 & 0 & 16 b^2 y & y (5 b-a)+a+4 b \\
\end{array}
\right).$$ 
 This indicates that $E(x,y;a,b)$ is the exponential generating function of an exponential Riordan moment sequence which has an ordinary generating function given by
$$\cfrac{1}{1-(y(b-a)+a)x-
\cfrac{b^2 yx^2}{1-(y(2b-a)+a+b)x-
\cfrac{4b^2 yx^2}{1-(y(3b-a)+a+2b)x-
\cfrac{9b^2 yx^2}{1-\cdots}}}}.$$ 
\end{proof}
\begin{corollary}
The ordinary generating function of the exponential Riordan moment sequence with e.g.f. $E(x,y;a,b)$ is given by $$\mathcal{J}(y(b-a)+a,y(b-a)+a+b(1+y),y(b-a)+a+2b(1+y),\ldots;b^2y, 4b^2y, 9b^2y,\ldots).$$ 
\end{corollary}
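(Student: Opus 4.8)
The plan is to obtain this corollary by reading the continued-fraction parameters straight off the production matrix that was computed in the proof of the preceding Theorem. The idea is to match that production matrix against the general exponential Riordan template displayed earlier, whose diagonal is $\alpha,\alpha+\gamma,\alpha+2\gamma,\ldots$ and whose subdiagonal is $\beta,2(\beta+\delta),3(\beta+2\delta),\ldots$, and then to substitute the extracted values of $\alpha,\beta,\gamma,\delta$ into the associated Jacobi form $\mathcal{J}(\alpha,\alpha+\gamma,\alpha+2\gamma,\ldots;\beta,2(\beta+\delta),3(\beta+2\delta),\ldots)$.

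First I would compare diagonals. The $(0,0)$ entry gives $\alpha=y(b-a)+a$, and the difference between the $(1,1)$ entry $y(2b-a)+a+b$ and $\alpha$ gives $\gamma=b(1+y)$; I would then check that the $(2,2)$ and $(3,3)$ entries indeed equal $\alpha+2\gamma$ and $\alpha+3\gamma$, so that the whole diagonal is the arithmetic progression with first term $y(b-a)+a$ and common difference $b(1+y)$. This produces the first argument string $y(b-a)+a,\,y(b-a)+a+b(1+y),\,y(b-a)+a+2b(1+y),\ldots$ of the claimed $\mathcal{J}$.

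Next I would compare subdiagonals. The first subdiagonal entry gives $\beta=b^2y$, and the second entry $4b^2y=2(\beta+\delta)$ forces $\delta=b^2y$; a check against the third entry $9b^2y=3(\beta+2\delta)$ confirms this, so the $n$-th subdiagonal entry is $n^2b^2y$. This produces the second argument string $b^2y,\,4b^2y,\,9b^2y,\ldots$ Substituting both strings into the $\mathcal{J}$ template yields exactly the stated expression, which I would cross-check term by term against the continued fraction already displayed at the close of the Theorem's proof.

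There is no genuine obstacle here, since every ingredient has already been produced in the Theorem; the only work is the elementary verification that the diagonal entries collapse to a single affine expression in $n$ and that the subdiagonal entries collapse to $n^2b^2y$, both one-line rearrangements. The point of the statement is organisational: it rewrites the Theorem's continued fraction in the compact $\mathcal{J}$ notation so that the exponential moment o.g.f.\ can be compared directly with the ordinary Riordan moment form $\mathcal{J}(y(b-a)+a,\,b(y+1),b(y+1),\ldots;\,b^2y,b^2y,\ldots)$ of the earlier Proposition.
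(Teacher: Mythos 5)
Your proposal is correct and matches the paper's (implicit) argument: the corollary is read directly off the production matrix and continued fraction established in the Theorem's proof, with the only work being the algebraic identities $y(nb-a)+a+(n-1)b = y(b-a)+a+(n-1)b(1+y)$ for the diagonal and $n^2b^2y$ for the subdiagonal, exactly as you describe.
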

This is the Eulerian case, where $\beta=\delta=b^2 y$. 
We are now in a position to interpret the transformation $\mathcal{T}$ in terms of ordinary generating functions expressed as continued fractions. The description is easiest in the direction of $\mathcal{T}^{-1}$.

We have that $\mathcal{T}^{-1}$ maps the ordinary Riordan moments defined by the Riordan array
$$\left(\frac{1+(a(y-1)+b)x}{1+b(y+1)x+b^2yx^2},\frac{x}{1+b(y+1)x+b^2yx^2}\right)$$ to the exponential Riordan moments defined by the exponential Riordan array
$$\left[\frac{1+bxy}{(1+bx)^{a/b}}, \frac{1}{b(1-y)}\ln\left(\frac{1+bx}{1+bxy}\right)\right].$$ 
In both cases, the moment sequences are described by the first column elements of the respective inverse arrays. 
In terms of continued fractions, we have that $\mathcal{T}^{-1}$ maps 

$$\mathcal{J}(y(b-a)+a, b(y+1),b(y+1),\ldots; b^2y,b^2y,\ldots)$$ to 

$$\mathcal{J}(y(b-a)+a,y(b-a)+a+b(1+y),y(b-a)+a+2b(1+y),\ldots;b^2y, 4b^2y, 9b^2y,\ldots).$$

Thus the first elements are mapped to their partial sums, while the second terms are scaled by $(1,4,9,\ldots)$. Reversing this process (taking first differences; dividing by $(1,4,9,\ldots)$) now gives us the effect of $\mathcal{T}$. 

\section{Further results}
\begin{proposition} The moment sequence $\mu_n$ defined by the ordinary Riordan array 
$$\left(\frac{1-\alpha x}{1+\beta x+\gamma x^2}, \frac{x}{1+\beta x+\gamma x^2}\right)$$ satisfies 
$$ \mu_n = [x^n] \frac{1}{x} \text{Rev}\left(\frac{x(1+\alpha x)}{1+(\beta+2\alpha)x+(\alpha^2+\alpha \beta+\gamma)x^2}\right).$$
\end{proposition}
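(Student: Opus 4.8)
The plan is to reduce the claim to a single quadratic functional equation and then to recognise the right-hand side as a reversion. First I would match the array
$\left(\frac{1-\alpha x}{1+\beta x+\gamma x^2}, \frac{x}{1+\beta x+\gamma x^2}\right)$
against the template
$\left(\frac{1+\lambda x+\mu x^2}{1+\alpha x+\beta x^2}, \frac{x}{1+\alpha x+\beta x^2}\right)$
used earlier, reading off the template parameters: the numerator $1-\alpha x$ forces the linear coefficient to be $-\alpha$ and the quadratic coefficient to vanish, while the denominator contributes $\beta$ and $\gamma$ in the two remaining slots. Since the quadratic coefficient of the numerator is zero, I am in the $\mu=0$ case of the template, so I may invoke the displayed Catalan form for the moment generating function. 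Carrying out the substitution (and tracking the sign flip caused by the linear coefficient being $-\alpha$) gives
$$\mu(x)=\frac{1}{1-(\beta+2\alpha)x}\,c\!\left(\frac{x\bigl(-\alpha+(\alpha^2+\alpha\beta+\gamma)x\bigr)}{(1-(\beta+2\alpha)x)^2}\right),$$
where $c$ is the Catalan generating function and $\mu_n=[x^n]\mu(x)$.

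Next I would convert this closed form into an algebraic relation. Writing $p=1-(\beta+2\alpha)x$ and using the defining identity $c(w)=1+w\,c(w)^2$ with $w$ the argument above and $c(w)=p\,\mu(x)$, the factors of $p^2$ cancel and one is left with the quadratic
$$\mu=1+(\beta+2\alpha)x\,\mu-\alpha x\,\mu^2+(\alpha^2+\alpha\beta+\gamma)x^2\,\mu^2,$$
which characterises $\mu(x)$ uniquely among power series with $\mu(0)=1$.

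Finally I would identify the target expression as a reversion. Set $h(x)=\frac{x(1+\alpha x)}{1+(\beta+2\alpha)x+(\alpha^2+\alpha\beta+\gamma)x^2}$. Because $h(x)=x+O(x^2)$ its reversion is a well-defined power series, and the asserted identity $\mu_n=[x^n]\frac{1}{x}\text{Rev}(h)$ is equivalent to $\text{Rev}(h)=x\,\mu(x)$, i.e.\ to $h\bigl(x\,\mu(x)\bigr)=x$. Substituting $x\mu$ for the argument of $h$ and cancelling one factor of $x$, the equation $h(x\mu)=x$ becomes $\mu(1+\alpha x\mu)=1+(\beta+2\alpha)x\mu+(\alpha^2+\alpha\beta+\gamma)x^2\mu^2$, which rearranges to exactly the quadratic from the previous step. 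Since $x\mu(x)=x+O(x^2)$ and $h(x)=x+O(x^2)$, the relation $h(x\mu)=x$ forces $x\mu(x)=\text{Rev}(h)$, giving the proposition.

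The routine-but-delicate part is the bookkeeping in the first two steps: the identification of the numerator coefficient as $-\alpha$ introduces several sign changes, and one must verify that the $p^2$ denominator cancels cleanly so that the Catalan identity collapses to a genuine quadratic. The only structural point—that a quadratic with $\mu(0)=1$ has a unique power-series solution, and that $h(x\mu)=x$ with both series of the form $x+O(x^2)$ identifies the reversion—is immediate, so I expect no real obstacle beyond careful algebra.
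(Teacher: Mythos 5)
Your proof is correct, and its overall strategy coincides with the paper's: both identify the moment generating function as the first column of the inverse array (via the closed form established earlier for the $\mu=0$ case) and then check that it agrees with the stated reversion. The difference lies in how that last step is carried out. The paper writes the moment generating function in its explicit radical form $\frac{2}{1-(2\alpha+\beta)x+\sqrt{1-2\beta x+(\beta^2-4\gamma)x^2}}$ and simply asserts that this equals $\frac{1}{x}\mathrm{Rev}(h)$ for $h(x)=\frac{x(1+\alpha x)}{1+(\beta+2\alpha)x+(\alpha^2+\alpha\beta+\gamma)x^2}$; verifying that assertion directly would mean solving $h(Y)=x$ and comparing branches of the square root. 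You instead stay with the Catalan form, use $c(w)=1+w\,c(w)^2$ to collapse the closed form to the quadratic $\mu=1+(\beta+2\alpha)x\mu-\alpha x\mu^2+(\alpha^2+\alpha\beta+\gamma)x^2\mu^2$, and observe that $h(x\mu)=x$ reduces to the same quadratic, so uniqueness of power-series solutions (and of reversion) finishes the argument. Your route avoids radicals entirely and actually supplies the verification the paper leaves implicit; I checked the sign bookkeeping ($\lambda=-\alpha$, $\alpha-2\lambda=\beta+2\alpha$, $\lambda-x(\alpha\lambda-\beta-\lambda^2)=-\alpha+(\alpha^2+\alpha\beta+\gamma)x$) and it is consistent with the paper's formulas and with the Motzkin and Motzkin-sum examples.
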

\begin{proof}
The moment sequence is the first column of the inverse array. This  has generating function 
$$\frac{2}{1-(2\alpha+\beta)x+\sqrt{1-2\beta x+(\beta^2-4\gamma)x^2}},$$ which is given by the reversion in the statement of the proposition.
\end{proof}
\begin{corollary} The generating function $G(x,y;a,b)=\mathcal{T}(E(t,y;a,b))(x)$ is the generating function of the moment sequence of the family of orthogonal polynomials defined by the ordinary Riordan array 
$$\left(\frac{1-(a(y-1)+b)x}{1+b(1+y)x+b^2yx^2}, \frac{x}{1+b(1+y)x+b^2yx^2}\right)$$
\end{corollary}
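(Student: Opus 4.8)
The plan is to recognize the ordinary Riordan array in the statement as an instance of the canonical orthogonal-polynomial template
$$\left(\frac{1+\lambda x+\mu x^2}{1+\alpha x+\beta x^2},\ \frac{x}{1+\alpha x+\beta x^2}\right),$$
whose moment sequence (the first column of the inverse) is, by the general discussion above, generated by the Jacobi continued fraction $\mathcal{J}(\alpha-\lambda,\alpha,\alpha,\ldots;\beta-\mu,\beta,\beta,\ldots)$. First I would read off the parameters by matching: the denominator $1+b(1+y)x+b^2yx^2$ forces $\alpha=b(1+y)$ and $\beta=b^2y$, while the numerator, having no quadratic term, forces $\mu=0$ and determines $\lambda$ from its linear coefficient. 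Since $\mu=0$, we land precisely in the regime where the moment generating function has the closed form
$$\frac{1}{1-x(\alpha-2\lambda)}\,c\!\left(\frac{x(\lambda-x(\alpha\lambda-\beta-\lambda^2))}{(1-x(\alpha-2\lambda))^2}\right)$$
recorded earlier, so the problem reduces to a substitution of the four numbers $\alpha,\beta,\lambda,\mu$.

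To finish I would compare Jacobi continued fractions directly rather than manipulate the radical. Substituting the values just read off into $\mathcal{J}(\alpha-\lambda,\alpha,\alpha,\ldots;\beta-\mu,\beta,\beta,\ldots)$, I would check that the resulting data coincide termwise with
$$\mathcal{J}(y(b-a)+a,\ b(y+1),\ b(y+1),\ldots;\ b^2y,\ b^2y,\ldots),$$
which an earlier Proposition already identifies as the expansion of $G(x,y;a,b)$. Concretely this amounts to three checks: that $\beta-\mu=\beta=b^2y$ (immediate once $\mu=0$); that the repeated diagonal entry $\alpha=b(1+y)$ agrees; and that the single exceptional first entry $\alpha-\lambda$ reproduces $y(b-a)+a$. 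Because the moment sequence of an orthogonal family is determined by its Jacobi data, the coincidence of these two continued fractions is exactly the assertion of the corollary. An entirely equivalent route, closer to the shape of the statement, is to feed the present array into the reversion formula of the immediately preceding Proposition and simplify its radical closed form $\tfrac{2}{1-(2\alpha+\beta)x+\sqrt{1-2\beta x+(\beta^2-4\gamma)x^2}}$ (in that Proposition's notation) down to $G$.

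I would take the continued-fraction route, since it confines essentially all of the work to the comparison of the single first coefficient: the repeated tails match on sight once $\alpha$, $\beta$ and $\mu$ are pinned down. The one point that genuinely needs care is that first entry. Solving $\alpha-\lambda=y(b-a)+a$ for $\lambda$ while keeping strict track of the sign of the linear term in the numerator is where an error is easiest to make, and one must confirm that the resulting $\lambda$ is consistent with the numerator displayed in the statement. I expect this sign bookkeeping, rather than anything structural, to be the main obstacle; once it is verified, uniqueness of the moment sequence attached to a Jacobi continued fraction closes the argument.
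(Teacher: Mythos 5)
Your overall strategy is sound and is genuinely different from the paper's. The paper proves the corollary by matching the coefficients $\alpha,\beta,\gamma$ of the reversion formula in the immediately preceding Proposition against the explicit rational function $\mathcal{S}\left(\frac{1}{E(t,y;a,b)}\right)(x)$ (whose reversion is, by the definition of $\mathcal{T}$, exactly $G$), and then solving the three resulting equations; this is the ``entirely equivalent route'' you mention and then set aside. Your route instead leans on the earlier Proposition identifying $G(x,y;a,b)$ with $\mathcal{J}(y(b-a)+a,b(y+1),b(y+1),\ldots;b^2y,b^2y,\ldots)$ together with the general $\mathcal{J}(\alpha-\lambda,\alpha,\ldots;\beta-\mu,\beta,\ldots)$ formula for ordinary Riordan moment sequences. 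That is legitimate, and arguably cleaner, since the repeated tail coefficients and the $\beta-\mu=b^2y$ condition match on sight.

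However, you stop exactly at the one computation that carries all the content, and that computation does not come out as you expect for the statement as printed. Reading $\lambda$ off the displayed numerator $1-(a(y-1)+b)x$ gives $\lambda=-(a(y-1)+b)$, hence
$$\alpha-\lambda=b(1+y)+a(y-1)+b=y(a+b)+2b-a,$$
which is not $y(b-a)+a$ (test $a=0$, $b=1$: you get $y+2$ where the Narayana moments require $y$). The check succeeds only if the numerator is $1+(a(y-1)+b)x$, i.e., $\lambda=a(y-1)+b$, whence $\alpha-\lambda=b(1+y)-a(y-1)-b=y(b-a)+a$ as required; this is precisely the form of the array displayed in Section 3 in the discussion of $\mathcal{T}^{-1}$, so the corollary as printed carries a sign slip (as, indeed, does the paper's own proof, which writes $\alpha=-a(y-1)+b$ where $\alpha=-(a(y-1)+b)$ is meant). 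Since you explicitly flagged this sign bookkeeping as the crux and then left it undone, your proposal is incomplete at the decisive point: carrying out that step is not optional here, because it is the only nontrivial verification in the argument and it is what exposes, and resolves, the discrepancy between the stated array and the earlier Proposition.
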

\begin{proof} We compare the coefficients $\alpha, \beta, \gamma$ in the proposition with the coefficients in 
$\mathcal{S}\left(\frac{1}{E(t,y;a,b)}\right)(x)$.  Thus 
$$\alpha=-a(y-1)+b, beta+2\alpha=(1-y)(2a-b), \alpha^2+\alpha \beta+\gamma=a(a-b)(y-1)^2.$$ We solve these equations for $\alpha, \beta, \gamma$. 
\end{proof}
\begin{proposition} Let $\mu_o(x)$ be the generating function of the moment sequence of the ordinary Riordan array 
$$\left(\frac{1-\alpha x}{1+\beta x+ \gamma x^2},\frac{x}{1+\beta x+ \gamma x^2}\right).$$ Then 
\begin{itemize}
\item $\mu_0(x)=\mathcal{J}(\beta+\alpha,\beta,\beta,\ldots;\gamma,\gamma,\gamma,\ldots)$.
\item The $\mathcal{T}^{-1}$ transform of $\mu_0(x)$ is 
$$\mu_e(x)=\frac{\sqrt{\beta^2-4 \gamma}e^{\frac{1}{2}(2\alpha+\beta)t}}
{\sqrt{\beta^2-4\gamma}\cosh\left(\frac{1}{2}\sqrt{\beta^2-4\gamma}t\right)-\beta \sinh\left(\frac{1}{2}\sqrt{\beta^2-4\gamma}t\right)}$$
\item The ordinary generating function of the transformed moment sequence is given by 
$$\mathcal{J}(\alpha+\beta, \alpha+2 \beta,\alpha+3 \beta, \ldots; \gamma,4\gamma, 9 \gamma,\ldots).$$
\end{itemize}
\end{proposition}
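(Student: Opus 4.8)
The plan is to treat the three displayed identities in turn; the first and last are manipulations inside the Jacobi continued-fraction formalism, while the middle one is the only substantive computation. The first identity is immediate from the general description of ordinary Riordan moment sequences recalled above: putting $\left(\frac{1-\alpha x}{1+\beta x+\gamma x^2},\frac{x}{1+\beta x+\gamma x^2}\right)$ into the canonical shape $\left(\frac{1+\lambda x+\mu x^2}{1+\alpha' x+\beta' x^2},\frac{x}{1+\alpha' x+\beta' x^2}\right)$ is the substitution $\lambda=-\alpha$, $\mu=0$, $\alpha'=\beta$, $\beta'=\gamma$, and the formula $\mu(x)=\mathcal{J}(\alpha'-\lambda,\alpha',\alpha',\ldots;\beta'-\mu,\beta',\beta',\ldots)$ then yields $\mu_o(x)=\mathcal{J}(\beta+\alpha,\beta,\beta,\ldots;\gamma,\gamma,\gamma,\ldots)$ at once.

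For the exponential generating function I would run the $\mathcal{T}^{-1}$ pipeline. Its first step asks for $g(x)=\frac1x\text{Rev}(x\mu_o)$, and here the preceding Proposition does the work: it writes $\mu_o$ itself as $\frac1x\text{Rev}$ of the rational function $\frac{x(1+\alpha x)}{1+(\beta+2\alpha)x+(\alpha^2+\alpha\beta+\gamma)x^2}$, so that $\text{Rev}(x\mu_o)$, a reversion of a reversion, returns that rational function and $g(x)=\frac{1+\alpha x}{1+(\beta+2\alpha)x+(\alpha^2+\alpha\beta+\gamma)x^2}$. The next step is the inverse Sumudu transform; from the integral definition together with $\int_0^\infty t^n e^{-t/x}\,dt=n!\,x^{n+1}$ one sees that $\mathcal{S}$ sends $\sum a_n t^n/n!$ to $\sum a_n x^n$, so its inverse sends $g(x)=\sum a_n x^n$ to $e(t)=\sum a_n t^n/n!$. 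I would then extract $[x^n]g$ by partial fractions, factoring the denominator as $(1-ux)(1-vx)$; the discriminant collapses cleanly to $(\beta+2\alpha)^2-4(\alpha^2+\alpha\beta+\gamma)=\beta^2-4\gamma$, which is exactly the radical in the target, the roots being $u,v=-\alpha-\tfrac\beta2\pm\tfrac12\sqrt{\beta^2-4\gamma}$. This gives $e(t)=Ae^{ut}+Be^{vt}$ with $A,B$ the usual residues, and $\mu_e(t)=1/e(t)$.

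The remaining work for the middle identity, which I expect to be the main obstacle, is the algebraic repackaging: factoring the common exponential out of $Ae^{ut}+Be^{vt}$ and recombining the surviving terms $e^{\pm\frac12\sqrt{\beta^2-4\gamma}\,t}$ into hyperbolic functions. This is routine but must be done carefully to land on the precise stated form, in particular to watch the numerator exponent $\frac12(2\alpha+\beta)t$ emerge as $-(\alpha+\tfrac\beta2)t$ with its sign reversing upon passing to $1/e(t)$, and to see the residue factors $D-\beta$ and $D+\beta$ (where $D=\sqrt{\beta^2-4\gamma}$) assemble into $D\cosh-\beta\sinh$. The third identity I would read off from the continued-fraction interpretation of $\mathcal{T}^{-1}$ established in the previous section, namely that it replaces the diagonal Jacobi coefficients by their partial sums and scales the successive off-diagonal coefficients by $1,4,9,\ldots$; applied to $\mu_o(x)=\mathcal{J}(\alpha+\beta,\beta,\beta,\ldots;\gamma,\gamma,\ldots)$ this produces $\mathcal{J}(\alpha+\beta,\alpha+2\beta,\alpha+3\beta,\ldots;\gamma,4\gamma,9\gamma,\ldots)$. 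The one point needing justification is that this rule, derived in the excerpt from the Eulerian family, applies here: since $\beta=b(1+y)$, $\gamma=b^2y$, $\alpha+\beta=y(b-a)+a$ can be solved for $a,b,y$ in terms of $\alpha,\beta,\gamma$, the present array lies in that family as a formal identity in the parameters, so the rule transfers verbatim; alternatively one matches $\mu_e$ against the general exponential Riordan moment template $\mathcal{J}(p,p+r,p+2r,\ldots;q,2(q+s),3(q+2s),\ldots)$, forcing $p=\alpha+\beta$, $r=\beta$, $q=s=\gamma$.
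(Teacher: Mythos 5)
The paper states this proposition without any proof, so there is nothing to compare against on that side; your proposal is, as far as I can check, a correct and essentially complete argument built from exactly the tools the paper has already set up. The first bullet is indeed immediate from the paper's formula $\mu(x)=\mathcal{J}(\alpha'-\lambda,\alpha',\ldots;\beta'-\mu,\beta',\ldots)$ with $\lambda=-\alpha$, $\mu=0$. For the second bullet I verified your computation to the end: the discriminant of $1+(\beta+2\alpha)x+(\alpha^2+\alpha\beta+\gamma)x^2$ does collapse to $\beta^2-4\gamma$, the roots are $u,v=-\alpha-\tfrac{\beta}{2}\pm\tfrac{D}{2}$ with $D=\sqrt{\beta^2-4\gamma}$, the residues come out as $\tfrac{D-\beta}{2D}$ and $\tfrac{D+\beta}{2D}$, and $1/\bigl(Ae^{ut}+Be^{vt}\bigr)$ assembles precisely into $\tfrac{D e^{(2\alpha+\beta)t/2}}{D\cosh(Dt/2)-\beta\sinh(Dt/2)}$; this also reproduces the paper's worked examples (e.g.\ $\alpha=-1,\beta=3,\gamma=3$ gives the $e^{t/2}/(\cos(\sqrt{3}t/2)-\sqrt{3}\sin(\sqrt{3}t/2))$ example after $\cosh(i\theta)=\cos\theta$). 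For the third bullet, of the two justifications you offer, the parameter-substitution one is the right choice: the consistency check $\beta+\alpha=b(1+y)-a(y-1)-b=y(b-a)+a$ works out (note this requires the numerator convention $1+(a(y-1)+b)x$ used in Section 3, not the sign as misprinted in the paper's Corollary following Proposition 4), and the system $\beta=b(1+y)$, $\gamma=b^2y$, $\alpha+\beta=y(b-a)+a$ is generically solvable, so the ``partial sums on the diagonal, squares on the off-diagonal'' rule transfers as a formal identity. Your alternative template-matching argument is weaker as stated, since matching against $\mathcal{J}(p,p+r,\ldots;q,2(q+s),\ldots)$ presupposes that $\mu_e$ is an exponential Riordan moment sequence of the required type, which would itself need the production-matrix computation of the paper's Theorem; I would lead with the substitution argument and drop or demote the other.
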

\section{Further examples}
\begin{example} The Motzkin numbers $M_n=\sum_{k=0}^{\lfloor \frac{n}{2} \rfloor} \binom{n}{2k}C_k$ are the moments of the Riordan array
$$\left(\frac{1}{1+x+x^2}, \frac{x}{1+x+x^2}\right),$$ where $a=0$, $b=1$, $c=1$. In this case we find that 
$$M_n = [x^n] \frac{1}{x} \text{Rev}\left(\frac{x}{1+x+x^2}\right).$$ 
The generating function of $M_n$ is thus $\frac{1-x-\sqrt{1-2x-3x^2}}{2x^2}$, which can be expressed as the continued fraction 
$$\cfrac{1}{1-x-
\cfrac{x^2}{1-x-
\cfrac{x^2}{1-x-\cdots}}},$$ or 
$$\mathcal{J}(1,1,1,\ldots; 1,1,1,\ldots).$$ 
Thus the $\mathcal{T}^{-1}$ transform of the Motzkin sequence has an ordinary generating function given by 
$$\mathcal{J}(1,2,3,\ldots; 1,4,9,\ldots).$$  This is \seqnum{A049774}, the number of permutations of $n$ elements not containing the consecutive pattern $123$. 
\end{example} 
\begin{example} 
The so-called ``Motzkin sums'' $MS_n=\sum_{k=0}^n \binom{n}{k}(\binom{k}{n-k}-\binom{k}{n-k-1})$ \seqnum{A005043} are the moments of the Riordan array
$$\left(\frac{1+x}{1+x+x^2}, \frac{x}{1+x+x^2}\right),$$ where $a=1$, $b=1$, $c=1$. In this case we find that
$$SM_n = [x^n] \frac{1}{x} \text{Rev}\left(\frac{x(1-x)}{1-x+x^2}\right).$$ In this case the generating function $\frac{1+x-\sqrt{1-2x-3x^2}}{2x(1+x)}$ is given by 
$$\mathcal{J}(0,1,1,\ldots; 1,1,1,\ldots).$$ Thus its $\mathcal{T}^{-1}$ transform is given by 
$$\mathcal{J}(0,1,2,3,\ldots; 1,4,9,\ldots).$$ This is \seqnum{A097899}, the number of permutations of $[n]$  with no runs of length $1$. 
\end{example}
\begin{example} We consider the sequence $a_n$  \seqnum{A052186} which begins
$$ 1, 0, 1, 3, 14, 77, 497, 3676, \ldots.$$ 
The ordinary generating function of this sequence is 
$$\mathcal{J}(0,3,5,7,9,\ldots;1,4,9,16,25,\ldots).$$ 
The $\mathcal{T}$ transform of this sequence will therefore have generating function 
$$\mathcal{J}(0,3,2,2,2,\ldots;1,1,1,1,1,\ldots).$$ 
Now the sequence with generating function 
$$\mathcal{J}(1,3,2,2,2,\ldots;1,1,1,1,1,\ldots)$$ which begins 
$$1, 1, 2, 6, 21, 78, 298, 1157, 4539, 17936, \ldots$$ is \seqnum{A129775}, the number of maximally clustered permutations in $S_n$ (those that avoid the patterns $3421$, $4312$ and $43214$). Thus the image of \seqnum{A052186} by $\mathcal{T}$ is the INVERT$(-1)$ transform of \seqnum{A129775}. 
Note that $a_n$ has the integral representation \cite{MK} 
$$a_n=\int_0^{\infty} \frac{x^n e^x}{(Ei(x)+e^x)^2+\pi^2}\,dx+\frac{-\mathfrak{p}}{\mathfrak{p}-1},$$ where 
$\mathfrak{p} \approx 0.434818$ is a simple pole of $\phi(z)=\frac{z}{1-e^z E_1(z)}-z$. 
\end{example}
\begin{example} We consider the $\mathcal{T}^{-1}$ transform of the sequence \seqnum{A064641}, which has a generating function 
$$\frac{1-x-\sqrt{1-6x-3x^2}}{2x(1+x)}=\frac{1}{x}\text{Rev}\left( \frac{x(1-x)}{1+x+x^2}\right).$$  
The general term of this sequence $b_n$ has the integral representation 
$$b_n = \frac{1}{2 \pi} \int_{3-2\sqrt{3}}^{3+2\sqrt{3}} x^n \frac{\sqrt{3(1+2x)-x^2}}{1+x}\,dx.$$ 
This sequence counts the number of paths from $(0,0)$ to $(n,n)$ not rising above $y=x$, using steps $(1,0)$, $(0,1)$, $(1,1)$ and $(2,1)$.

Letting $g(x)=\frac{1-x}{1+x+x^2}$, we calculate the inverse Laplace transform of $\frac{1}{s}f\left(\frac{1}{s}\right)$. We obtain 
$$e^{-t/2}\left(\cos\left(\frac {\sqrt{3}t}{2}\right)-\sqrt{3}\sin\left(\frac{\sqrt{3}t}{2}\right)\right).$$ 
Thus the desired transform is the exponential generating function 
$$ \frac{e^{t/2}}{\cos\left(\frac {\sqrt{3}t}{2}\right)-\sqrt{3}\sin\left(\frac{\sqrt{3}t}{2}\right)}.$$ 
This expands to the sequence that begins 
$$1, 2, 7, 35, 232, 1919, 19045, 220502, 2917663, 43431983,\ldots.$$ 
In this case we have  
$$\mathcal{J}(2,3,3,3,\ldots;3,3,3,3) \rightarrow \mathcal{J}(2,5,8,11,\ldots;3,12,27,48,\ldots).$$ 
\end{example}

\begin{example} We have seen that the $\mathcal{T}^{-1}$ transform operates on moment sequences defined by Riordan arrays of the form 
$$\left(\frac{1-ax}{1+bx+cx^2}, \frac{x}{1+bx+cx^2}\right).$$ 
However, ordinary Riordan arrays of the form 
$$\left(\frac{1-ax-bx^2}{1+cx+dx^2}, \frac{x}{1+cx+dx^2}\right)$$ also define moment sequences. To see the nature of the obstruction, we take the example of the matrix 
$$\left(\frac{1-x-x^2}{1+x+x^2}, \frac{x}{1+x+x^2}\right).$$ 
Then the relevant moment sequence has generating function 
$$G(x)=\frac{1}{\sqrt{1-2x-3x^2}-x}=\frac{\sqrt{1-2x-3x^2}-x}{1-2x-4x^2}.$$ This expands to give the sequence \seqnum{A111961}, which begins 
$$1, 2, 6, 18, 56, 176, 558, 1778, 5686, 18230, \ldots.$$
Its generating function is equal to 
$$\mathcal{J}(2,1,1,1,\ldots; 2,1,1,1,\ldots).$$ 
We now attempt to apply the $\mathcal{T}^{-1}$ transform in its ``reversion-Sumudu$^{-1}$-inverting'' form. 
We find that 
$$\frac{1}{x}\text{Rev}(xG(x))=\frac{\sqrt{1+2x+5x^2}-x}{1+2x+4x^2}.$$
Given its non-rational form, it is problematic to apply $\mathcal{S}^{-1}$ to this generating function. 
We can of course proceed as before with the continued fraction mapping, to get 
$$\mathcal{J}(2,1,1,1,\ldots; 2,1,1,1,\ldots) \rightarrow \mathcal{J}(2,3,4,5,\ldots;2,4,9,16,\ldots).$$
The image sequence then begins 
$$1, 2, 6, 22, 94, 454, 2454, 14766, 98678, 730422, \ldots.$$
\end{example}

\section{A note on the symmetric Eulerian triangle}
We have seen two variants of the Eulerian triangle that are associated with the sequence $1,4,9,\ldots$. 
The Pascal-like (or centrally symmetric) variant \seqnum{A008292} that begins 
$$\left(
\begin{array}{ccccccc}
 1 & 0 & 0 & 0 & 0 & 0 & 0 \\
 1 & 1 & 0 & 0 & 0 & 0 & 0 \\
 1 & 4 & 1 & 0 & 0 & 0 & 0 \\
 1 & 11 & 11 & 1 & 0 & 0 & 0 \\
 1 & 26 & 66 & 26 & 1 & 0 & 0 \\
 1 & 57 & 302 & 302 & 57 & 1 & 0 \\
 1 & 120 & 1191 & 2416 & 1191 & 120 & 1 \\
\end{array}
\right)$$ differs from the other two forms. 
To see this, we consider the bivariate generating function 
$$\frac{e^{x(1+y)}(1-y)^2}{(y e^x- e^{y x})^2}.$$ 
Again, we can show that this is the generating function of an exponential Riordan moment sequence. 
In this case, we find that the ordinary generating function is 
$$\mathcal{J}(y+1, 2(y+1), 3(y+1), \ldots; 2y, 6y, 12y, 20y,\ldots)$$ thus associating it with the sequence 
$1,3,6,\ldots$ rather than $1,4,9,\ldots$. 

If we now use an analogue of the $\mathcal{T}$ transform to associate with this an ordinary Riordan moment sequence, we get
$$\mathcal{J}(y+1,2(y+1),\ldots;2y,6y,12y,\ldots) \rightarrow \mathcal{J}(y+1,y+1,y+1,\ldots;2y,2y,2y,\ldots).$$ This last term gives us the triangle with bivariate generating function 
$$\frac{1}{1-x(1+y)}c\left(\frac{2x^2y}{(1-x(1+y))^2}\right).$$ 
This triangle begins 
$$\left(
\begin{array}{ccccccc}
 1 & 0 & 0 & 0 & 0 & 0 & 0 \\
 1 & 1 & 0 & 0 & 0 & 0 & 0 \\
 1 & 4 & 1 & 0 & 0 & 0 & 0 \\
 1 & 9 & 9 & 1 & 0 & 0 & 0 \\
 1 & 16 & 38 & 16 & 1 & 0 & 0 \\
 1 & 25 & 110 & 110 & 25 & 1 & 0 \\
 1 & 36 & 255 & 480 & 255 & 36 & 1 \\
\end{array}
\right).$$

\section{Conclusion}
In this note we have shown that ordinary Riordan moment sequences, defined by Riordan arrays of the form $\left(\frac{1-\alpha x}{1+\beta x+ \gamma x^2},\frac{x}{1+\beta x+ \gamma x^2}\right)$, can be mapped to exponential Riordan moments of Eulerian type. Some interesting sequence pairings have been exhibited between lattice path theoretic sequences and permutation-based sequences. There remains the challenge of putting this algebraic-analytic mapping into a full combinatorial context. As an example, we have seen that 
$$\mathcal{T}: \sum_{w \in S_n} t^{des(w)} \rightarrow \sum_{w \in S_n(231)}t^{des(w)}.$$

\bigskip
\hrule
\bigskip
\noindent 2010 {\it Mathematics Subject Classification}: Primary
15B36; Secondary 33C45, 11B83, 11C20, 05A15, 44A10.
\noindent \emph{Keywords:} Riordan array, orthogonal polynomial, Chebyshev polynomial, Catalan number, moment sequence, Sumudu transform.

\bigskip
\hrule
\bigskip
\noindent Concerned with sequences
\seqnum{A000108},
\seqnum{A000142},
\seqnum{A000629},
\seqnum{A000670},
\seqnum{A001003},
\seqnum{A001006},
\seqnum{A001586},
\seqnum{A005043},
\seqnum{A005043},
\seqnum{A006318},
\seqnum{A008292},
\seqnum{A064641},
\seqnum{A021009},
\seqnum{A049774},
\seqnum{A049774},
\seqnum{A052186},
\seqnum{A052709},
\seqnum{A060187},
\seqnum{A090181},
\seqnum{A097899},
\seqnum{A097899},
\seqnum{A111961},
\seqnum{A123125},
\seqnum{A129775},
\seqnum{A131198}, and
\seqnum{A173018}.

\end{document}